\documentclass{amsart}

\numberwithin{equation}{section}

\usepackage{amsmath}
\usepackage{amsfonts}
\usepackage{amsthm}
\usepackage{amssymb}
\usepackage{bm}
\usepackage[applemac]{inputenc}
\usepackage{enumerate}
\bibliographystyle{alpha}

\newtheoremstyle{mystyle}{}{}{\slshape}{2pt}{\scshape}{.}{ }{} 

\newtheorem{thm}{Theorem}[section]
\newtheorem{cor}[thm]{Corollary}

\newtheorem{prop}[thm]{Proposition}
\newtheorem{lemme}[thm]{Lemma}
\newtheorem{fait}[thm]{Fact}

\newtheorem{question}[thm]{Question}

\theoremstyle{definition}
\newtheorem{defi}[thm]{Definition}
\theoremstyle{mystyle}

\theoremstyle{remark}

\newcommand{\impl}{\rightarrow}

\DeclareMathOperator{\Av}{Av}

\def\indsym#1#2{%
 \setbox0=\hbox{$\m@th#1x$}%
 \kern\wd0%
 \hbox to 0pt{\hss$\m@th#1\mid$\hbox to 0pt{$\m@th#1^{#2}$\hss}\hss}%
 \lower.9\ht0\hbox to 0pt{\hss$\m@th#1\smile$\hss}%
 \kern\wd0}

\def\nindsym#1#2{%
 \setbox0=\hbox{$\m@th#1x$}%
 \kern\wd0%
 \hbox to 0pt{\hss$\m@th#1\not$\kern1.4\wd0\hss}
 \hbox to 0pt{\hss$\m@th#1\mid$\hbox to 0pt{$\m@th#1^{#2}$\hss}\hss}%
 \lower.9\ht0\hbox to 0pt{\hss$\m@th#1\smile$\hss}%
 \kern\wd0}

\title{A Note on ``Regularity lemma for distal structures"}
\author{Pierre Simon}
\address{Universit\'e Claude Bernard - Lyon 1\\
Institut Camille Jordan\\
43 boulevard du 11 novembre 1918\\
69622 Villeurbanne Cedex, France}
\email{simon@math.univ-lyon1.fr}
\thanks{Partially supported by ValCoMo (ANR-13-BS01-0006).}

\date{\today}

\begin{document}
\maketitle

In their recent paper \cite{CherStar}, Chernikov and Starchenko prove that graphs defined in distal theories have the \emph{strong Erd\H os-Hajnal property}: if $R\subseteq M^m \times M^n$ is a definable relation in some distal structure $M$, there is $\alpha>0$ such that if $A\subseteq M^m$, $B\subseteq M^n$ are finite sets, then there are $A_0\subseteq A$, $B_0\subseteq B$ with $|A_0|\geq \alpha |A|$, $|B_0|\geq \alpha |B|$ and $R\cap A_0\times B_0$ is either empty or equal to $A_0 \times B_0$. In fact, they prove a more general statement for hypergraphs and generically stable measures instead of counting measure, see Theorem \ref{th_main} below. This generalizes a theorem of Alon et al. \cite{APPRS} who proved it if $R$ is semi-algebraic.

Chernikov and Starchenko's proof uses both the theory of Keisler measures in NIP and some combinatorial arguments. The purpose of this note is to remove the latter and give a purely model-theoretic (and in fact quite short) proof of the strong Erd\H os-Hajnal property in distal theories. We also answer a small question raised in \cite{CherStar} about uniformly cutting finite sets and show that this property is equivalent to having no generically stable types. We keep this note short and refer to the excellent introduction of \cite{CherStar} for background.

\smallskip
Thanks to Sergei Starchenko for comments on a preliminary draft.

\section{Generically stable and smooth measures}

We record here some facts about generically stable and smooth measures in NIP theories. Everything appeared in previous works, possibly in different terms. See e.g. \cite[Chapter 7]{NIPBook} for details.

Throughout this note, we assume that our ambient theory $T$ is NIP. Recall that a (Keisler) measure $\mu(x)$ over some set $A$ is a finitely additive probability measure on the boolean algebra of $A$-definable sets (in free variable $x$). Such a measure extends uniquely to a regular Borel measure on $S_x(A)$.

Of special importance are measures $\mu(x)$ which can be approximated by average measures on finite sets. Such measures are called \emph{generically stable}. Here is one way to define them, where $\Av(a_1,\ldots,a_n;X)$ means $\frac 1 n |\{i : a_i \in X\}|$.

\begin{defi}\label{defi_genstable}
A measure $\mu(x)$ over a model $M$ is generically stable if for every formula $\phi(x;y)$ and $\epsilon$, there are $a_1,\ldots,a_n \in M$ such that for any $b\in M^{|y|}$, $|\mu(\phi(x;b))-\Av(a_1,\ldots,a_n;\phi(x;b))|\leq \epsilon$.\end{defi}

In particular, the normalized counting measure on a finite set $\{a_1,\ldots,a_n\}$ is generically stable. The VC-theorem implies that the number $n$ above depends only on $\phi(x;y)$ and $\epsilon$:

\begin{fait}\label{fact_genstable}
Let $\phi(x;y)$ be a formula and $\epsilon>0$, then there is an integer $n$ such that for any generically stable measure $\mu(x)$ on a model $M$, they are $a_1,\ldots,a_n \in M$ such that for any $b\in M^{|y|}$, $|\mu(\phi(x;b))-\Av(a_1,\ldots,a_n;\phi(x;b))|\leq \epsilon$.
\end{fait}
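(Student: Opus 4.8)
The plan is to deduce this from the Vapnik--Chervonenkis $\epsilon$-approximation theorem together with the defining property of generic stability; the key point is that, since $T$ is NIP, the only parameter of $\phi(x;y)$ that governs the needed sample size is its VC dimension, which is finite.

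Fix $\phi(x;y)$ and $\epsilon>0$. As $T$ is NIP, the set system $\{\phi(M^{|x|};b) : b \in M^{|y|}\}$ on the ground set $M^{|x|}$ has finite VC dimension $d$, and $d$ depends only on $\phi$, not on $M$. By the VC $\epsilon$-approximation theorem, there is an integer $n=n(d,\epsilon)$ (one can take $n$ of the order of $\frac{d}{\epsilon^2}\log\frac1\epsilon$) with the following property: for any finite multiset $\{b_1,\ldots,b_N\}$ of elements of $M^{|x|}$, there are $a_1,\ldots,a_n$ chosen among the $b_j$ (repetitions allowed) such that
\[
\bigl|\Av(b_1,\ldots,b_N;\phi(x;b))-\Av(a_1,\ldots,a_n;\phi(x;b))\bigr|\le \epsilon/2 \quad\text{for all } b\in M^{|y|};
\]
indeed, this is exactly the assertion that the uniform probability measure on $\{b_1,\ldots,b_N\}$ admits an $(\epsilon/2)$-approximation of size $n$ with respect to the family $\{\phi(x;b)\}$.

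Now let $\mu(x)$ be an arbitrary generically stable measure on a model $M$. Applying Definition \ref{defi_genstable} to $\phi$ and $\epsilon/2$, we obtain $b_1,\ldots,b_N\in M$ (with $N$ a priori depending on $\mu$) such that $|\mu(\phi(x;b))-\Av(b_1,\ldots,b_N;\phi(x;b))|\le\epsilon/2$ for all $b$. Feeding this multiset into the previous paragraph produces $a_1,\ldots,a_n\in M$ with $n=n(d,\epsilon)$ such that, by the triangle inequality, $|\mu(\phi(x;b))-\Av(a_1,\ldots,a_n;\phi(x;b))|\le\epsilon$ for all $b\in M^{|y|}$. Since $n$ depends only on $\phi$ and $\epsilon$, this is precisely the claim.

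The only substantive ingredient is the VC approximation theorem; the remaining steps are a triangle inequality. The point that requires a little care is to invoke the two-sided $\epsilon$-\emph{approximation} statement, uniform over all the sets $\phi(x;b)$, rather than the weaker $\epsilon$-net statement, and to apply it to the uniform measure on the finite multiset $\{b_1,\ldots,b_N\}$ so that the resulting sample consists of genuine elements of $M$.
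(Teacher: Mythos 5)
Your proof is correct, and it is exactly the argument the paper has in mind: the paper does not write out a proof of this Fact, but states it as a consequence of "the VC-theorem" and immediately notes that $n$ depends only on the VC-dimension of $\phi$ and $\epsilon$, which is precisely the content of your second paragraph. Combining the defining $\epsilon/2$-approximation of $\mu$ by a (possibly large) finite average with the two-sided VC $\epsilon/2$-approximation theorem applied to that finite multiset, then using the triangle inequality, is the intended route; your cautionary remarks (use the approximation theorem, not the net theorem, and sample from the multiset so the witnesses lie in $M$) are exactly the points one needs to get right.
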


In fact, the number $n$ depends only on the VC-dimension of $\phi$ (and $\epsilon$).

\begin{cor}\label{cor_ultra}
Let $I$ be an index set and for $i\in I$, $\mu_i(x)$ a generically stable measure on a model $M_i$. For $\mathcal U$ an ultrafilter on $I$, the ultraproduct $\prod_{\mathcal U} \mu_i$ is a generically stable measure on $\prod_{\mathcal U} M_i$.
\end{cor}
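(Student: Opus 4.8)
The plan is to check directly that the ultraproduct measure, defined in the natural way by $\mu(\phi(x;b)) = \lim_{\mathcal U}\mu_i(\phi(x;b_i))$ for $b=[b_i]_{\mathcal U}$, satisfies Definition \ref{defi_genstable}, the crucial input being the \emph{uniform} bound supplied by Fact \ref{fact_genstable}.

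First I would record that $\mu := \prod_{\mathcal U}\mu_i$ is a well-defined Keisler measure on $M := \prod_{\mathcal U} M_i$. By \L os's theorem, whether two instances $\phi(x;b)$ and $\psi(x;c)$ define the same subset of $M$ (or whether $\phi(x;b)$ is empty, or a finite Boolean combination is, etc.) is decided on a set of indices belonging to $\mathcal U$; hence $\mu_i(\phi(x;b_i))$ and $\mu_i(\psi(x;c_i))$ agree for $\mathcal U$-almost all $i$, so the ultralimit is independent of the chosen representatives and respects all Boolean identities. Finite additivity and $\mu(x=x)=1$ pass to the ultralimit because they hold for every $\mu_i$.

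Now fix a formula $\phi(x;y)$ and $\epsilon>0$. Apply Fact \ref{fact_genstable} to obtain an integer $n$, depending only on $\phi$ and $\epsilon$, such that for every $i$ there are $a^i_1,\ldots,a^i_n\in M_i$ with $|\mu_i(\phi(x;b))-\Av(a^i_1,\ldots,a^i_n;\phi(x;b))|\leq\epsilon$ for all $b\in M_i^{|y|}$. Put $a_j:=[a^i_j]_{\mathcal U}\in M$ for $1\leq j\leq n$. Given $b=[b_i]_{\mathcal U}\in M^{|y|}$, applying \L os's theorem to each of the finitely many formulas $\phi(a_j;y)$ produces a single set $U\in\mathcal U$ such that, for all $i\in U$ and all $j$, $M\models\phi(a_j;b)$ iff $M_i\models\phi(a^i_j;b_i)$; hence $\Av(a_1,\ldots,a_n;\phi(x;b))=\Av(a^i_1,\ldots,a^i_n;\phi(x;b_i))$ for every $i\in U$. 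Writing $c:=\Av(a_1,\ldots,a_n;\phi(x;b))$, a constant, we get $|\mu_i(\phi(x;b_i))-c|\leq\epsilon$ for all $i\in U$, and taking the ultralimit over $i\in I$ yields $|\mu(\phi(x;b))-c| = |\lim_{\mathcal U}\mu_i(\phi(x;b_i))-c|\leq\epsilon$. Since $b$ was arbitrary, $\mu$ is generically stable.

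The one genuine point is the uniformity of $n$ in Fact \ref{fact_genstable}: without it the number of witnesses needed in $M_i$ could grow with $i$ and fail to assemble into a single finite tuple of elements of $M$. With that uniformity in hand, the rest is a routine transfer via \L os's theorem, and I do not expect any real obstacle.
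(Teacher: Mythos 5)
Your proof is correct and follows the same route as the paper: apply Fact \ref{fact_genstable} to get a uniform $n$ for each factor measure, assemble the witnesses into elements of the ultraproduct, and transfer the approximation bound via \L os's theorem. You spell out the \L os transfer more explicitly than the paper does, but the argument is the same.
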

\begin{proof}
Let $\tilde \mu = \prod_{\mathcal U} \mu_i$ and fix a formula $\phi(x;y)$ and $\epsilon>0$. For each $i$, take $a_1(i),\ldots, a_n(i)$ in $M_i$ given by Fact \ref{fact_genstable} for $\mu_i$. Define $\bar a_k := [a_k(i):i\in I]\in \prod_\mathcal U M_i$. Then for any parameter $b$, we have $|\tilde \mu (\phi(x;b)) - \Av(\bar a_1,\ldots,\bar a_k;\phi(x;b))|\leq \epsilon$. Hence $\tilde \mu$ is generically stable.
\end{proof}


A measure $\mu(x)$ over $M$ is called \emph{smooth} if it has a unique extension to any bigger model $N$. We have the following equivalent condition (which follows easily by compactness, see \cite[Lemma 7.8]{NIPBook}).

\begin{fait}\label{fact_smooth}
A measure $\mu(x)$ over $M$ is smooth if and only if for every formula $\phi(x;y)$ and $\epsilon>0$, there are finitely many formulas $\psi_i(y)$ and $\theta^1_i(x)$, $\theta^2_i(x)$ all over $M$ such that:
\begin{enumerate}
\item the formulas $\psi_i(y)$ partition $y$-space;

\item for all $i$ and $b\in M^{|y|}$, if $M\models \psi_i(b)$, then
\[
M\models (\theta_i^1(x) \impl \phi(x;b) \impl \theta_i^ 2(x));
\]

\item for all $i$, $\mu(\theta_i^2(x))-\mu(\theta_i^ 1(x))\leq \epsilon$.
\end{enumerate}
\end{fait}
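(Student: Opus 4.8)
The plan is to prove both implications; only ``smooth $\Rightarrow$ approximation property'' requires an idea, the converse being immediate. For the converse I would take $N\succeq M$ and an arbitrary extension $\nu$ of $\mu$ to $N$, and show $\nu$ is forced. Given $\phi(x;y)$, $b\in N^{|y|}$ and $\epsilon>0$, apply the hypothesis to $\phi$ and $\epsilon$: by (1) there is a unique $i$ with $N\models\psi_i(b)$, and (2), being first-order over $M$, also holds in $N$, so $\theta^1_i(N)\subseteq\phi(N;b)\subseteq\theta^2_i(N)$. Since $\theta^1_i,\theta^2_i$ are over $M$, the number $\nu(\phi(x;b))$ lies in the interval $[\mu(\theta^1_i),\mu(\theta^2_i)]$, whose length is $\le\epsilon$ by (3) and which does not depend on $\nu$. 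Letting $\epsilon\to 0$ pins down $\nu(\phi(x;b))$ from $\mu$ alone, so $\nu$ is unique and $\mu$ is smooth.

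For the main direction, assume $\mu$ is smooth and fix $\phi(x;y)$ and $\epsilon>0$. For $q(y)\in S_y(M)$ I would put
\[
s_q=\sup\{\mu(\theta):\theta(x)\text{ over }M,\ \forall x(\theta(x)\impl\phi(x;y))\in q\},\qquad t_q=\inf\{\mu(\theta):\theta(x)\text{ over }M,\ \forall x(\phi(x;y)\impl\theta(x))\in q\};
\]
then $s_q\le t_q$, and the key claim is that $s_q=t_q$. To see it, choose $N\succeq M$ containing a realization $b$ of $q$; if $s_q<t_q$, then using the classical fact that a finitely additive measure on a Boolean algebra extends to any larger Boolean algebra taking any prescribed value (inside the obviously forced interval) on a single new element, together with Zorn's lemma to fill in the rest, one produces two extensions of $\mu$ to $N$ assigning $\phi(x;b)$ the distinct values $s_q$ and $t_q$, contradicting smoothness. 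Given $s_q=t_q$, pick $\theta^1_q,\theta^2_q$ over $M$ with $\forall x(\theta^1_q(x)\impl\phi(x;y))\in q$, $\forall x(\phi(x;y)\impl\theta^2_q(x))\in q$ and $\mu(\theta^2_q)-\mu(\theta^1_q)\le\epsilon$, and let $\chi_q(y)$ be the conjunction of those two $M$-formulas; then $\chi_q\in q$. Finally the clopen sets $[\chi_q]$ cover the compact space $S_y(M)$, so finitely many $[\chi_{q_1}],\dots,[\chi_{q_k}]$ already cover; I would set $\psi_j:=\chi_{q_j}\wedge\bigwedge_{l<j}\neg\chi_{q_l}$ and attach $\theta^1_{q_j},\theta^2_{q_j}$ to $\psi_j$. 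Since $\bigvee_{j}\psi_j$ is a tautology and each $\psi_j$ implies $\chi_{q_j}$, this family satisfies (1)--(3).

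The only genuinely non-trivial point I anticipate is the equality $s_q=t_q$: this is where uniqueness of the extension gets converted into the statement that $\phi(x;b)$ is squeezed between $M$-definable sets of nearly equal $\mu$-measure, and it rests on the elementary-but-not-quite-obvious measure-extension lemma invoked above (extending a charge to a larger Boolean algebra with a prescribed value on a new element). The remaining ingredients --- elementarity, compactness of $S_y(M)$, and the disjointification of the cover --- are routine bookkeeping.
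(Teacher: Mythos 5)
Your proof is correct and is, in substance, the standard argument that the paper defers to (Lemma 7.8 in Simon's NIP book): reduce smoothness to the inner-outer measure gap of $\phi(x;b)$ over the algebra of $M$-definable sets via the classical finitely-additive measure-extension lemma (the same Lemma 7.3 the paper invokes for the next lemma), then use compactness of $S_y(M)$ and disjointification. The converse direction is handled the same way as in the reference.
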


Let $\mu(x)$ and $\nu(y)$ be two measures over the same base $M$. We say that a measure $\omega(x,y)$ over $M$ is a \emph{product measure} of $\mu(x)$ and $\lambda(y)$ if $\omega(\phi(x)\wedge \psi(y))=\mu(\phi(x))\cdot \lambda(\psi(y))$ for any two definable sets $\phi(x)$ and $\psi(y)$. We let $(\mu \times \lambda)(x,y)$ denote the partial measure defined on the boolean algebra generated by rectangles $\phi(x)\wedge \psi(y)$ and giving such a set the product measure $\mu(\phi(x)) \cdot \lambda(\psi(y))$ (this determines the measure on the boolean algebra since a boolean combination of rectangles can be written as a disjoint union of rectangles). Note that if $\lambda(y)$ is a type, then any extension of $\mu(x) \cup \lambda(y)$ is a product measure.

We say that two measures $\mu(x)$ and $\lambda(y)$ over $M$ are \emph{weakly orthogonal} if there is a unique product measure of $\mu(x)$ and $\lambda(y)$.

\begin{lemme}
Two measures $\mu(x)$ and $\lambda(y)$ over $M$ are weakly orthogonal if and only if the following property holds:

For any $M$-definable set $D(x,y)$ and $\epsilon>0$, there are two definable sets $D^-_{\epsilon}(x,y)$ and $D^+_\epsilon(x,y)$ such that:

$(i)$ $D^-_\epsilon$ and $D^+_\epsilon$ are union of rectangles $A_i(x)\times B_i(y)$;

$(ii)$ $D^-_\epsilon \subseteq D \subseteq D^+_\epsilon$;

$(iii)$ $(\mu \times \lambda)(D^+_\epsilon) - (\mu\times \lambda)(D^-_\epsilon) \leq \epsilon$.
\end{lemme}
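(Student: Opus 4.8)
The plan is to prove both directions by a routine application of the theory of finitely additive measures on the Boolean algebra generated by rectangles; no model theory beyond the definitions is needed. Write $\mathcal R$ for the Boolean algebra of $M$-definable subsets of $(x,y)$-space generated by the rectangles $A(x)\times B(y)$. As recalled just before the lemma, every element of $\mathcal R$ is a finite disjoint union of rectangles; in particular $\mathcal R$ is exactly the set of finite unions of rectangles, and $(\mu\times\lambda)$ is a well-defined finitely additive probability measure on $\mathcal R$. By the definition of a product measure together with finite additivity, a measure $\omega(x,y)$ over $M$ is a product measure of $\mu$ and $\lambda$ if and only if its restriction to $\mathcal R$ equals $(\mu\times\lambda)$; thus the product measures of $\mu$ and $\lambda$ are precisely the extensions of the finitely additive measure $(\mu\times\lambda)$ from $\mathcal R$ to the full algebra $\mathcal B$ of $M$-definable subsets of $(x,y)$-space.

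For the direction $(\Leftarrow)$, assume the approximation property and let $\omega_1,\omega_2$ be product measures. Given a definable $D$ and $\epsilon>0$, choose $D^-_\epsilon\subseteq D\subseteq D^+_\epsilon$ in $\mathcal R$ with $(\mu\times\lambda)(D^+_\epsilon)-(\mu\times\lambda)(D^-_\epsilon)\le\epsilon$. Since $D^-_\epsilon,D^+_\epsilon\in\mathcal R$ we have $\omega_i(D^\pm_\epsilon)=(\mu\times\lambda)(D^\pm_\epsilon)$, so by monotonicity $(\mu\times\lambda)(D^-_\epsilon)\le\omega_i(D)\le(\mu\times\lambda)(D^+_\epsilon)$ for $i=1,2$, whence $|\omega_1(D)-\omega_2(D)|\le\epsilon$. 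Letting $\epsilon\to0$ gives $\omega_1(D)=\omega_2(D)$ for every definable $D$, i.e. $\omega_1=\omega_2$, so $\mu$ and $\lambda$ are weakly orthogonal.

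For $(\Rightarrow)$ I argue by contraposition: suppose the approximation property fails, so there are a definable $D$ and an $\epsilon>0$ such that $(\mu\times\lambda)(D^+)-(\mu\times\lambda)(D^-)>\epsilon$ for all $D^-\subseteq D\subseteq D^+$ in $\mathcal R$. Put $s:=\sup\{(\mu\times\lambda)(A):A\in\mathcal R,\ A\subseteq D\}$ and $t:=\inf\{(\mu\times\lambda)(A):A\in\mathcal R,\ D\subseteq A\}$; taking in the displayed inequality the infimum over $D^+$ and then the supremum over $D^-$ yields $t-s\ge\epsilon>0$. The main point — and the only non-formal ingredient — is now to produce two product measures that take different values on $D$, which is exactly the content of the classical theorem on extending finitely additive measures: since $s$ and $t$ are respectively the inner and outer $(\mu\times\lambda)$-measure of $D$ relative to $\mathcal R$, one may first extend $(\mu\times\lambda)$ to the algebra generated by $\mathcal R\cup\{D\}$ by assigning to $D$ the value $s$ in one case and $t$ in the other, and then extend to all of $\mathcal B$ by Zorn's lemma. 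This yields finitely additive probability measures $\omega_1,\omega_2$ on $\mathcal B$ extending $(\mu\times\lambda)$ with $\omega_1(D)=s\ne t=\omega_2(D)$; by the first paragraph each $\omega_i$ is a product measure of $\mu$ and $\lambda$, so $\mu$ and $\lambda$ are not weakly orthogonal. The only routine checks are that $\mathcal R$ is the set of finite unions of rectangles (so that $s$ and $t$ match the quantities appearing in clause $(iii)$) and the precise statement of the extension theorem, for which a convenient reference is the book \emph{Theory of Charges} of Bhaskara Rao and Bhaskara Rao.
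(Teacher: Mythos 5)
Your proof is correct and takes essentially the same route as the paper: both directions go through the observation that product measures of $\mu$ and $\lambda$ are exactly the extensions of $(\mu\times\lambda)$ from the algebra of unions of rectangles to the full algebra of definable sets, and the forward direction in both cases rests on the extension theorem for finitely additive measures (the paper cites Lemma~7.3 of \cite{NIPBook}, where you invoke the classical result from the theory of charges and spell out the inner/outer-measure bookkeeping that the paper leaves implicit).
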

\begin{proof}
Right to left: the conditions imply that any product measure must give $D$ measure $\inf_{\epsilon>0} (\mu \times \lambda)(D^+_\epsilon) = \sup_{\epsilon <0}(\mu \times \lambda)(D^-_\epsilon)$. Hence there is a unique such measure.

For the converse, we use Lemma 7.3 in \cite{NIPBook}. It says that given a boolean algebra $\Omega$ of definable sets and a partial measure $\mu_0$ on $\Omega$, we can extend $\mu_0$ to a Keisler measure $\mu$ and additionally impose $\mu(D)=r$ for some definable set $D$ and real $r\in [0,1]$ as long as there is no obvious obstruction, namely any $B\in\Omega$, $B\subseteq D$ has measure $\leq r$ and any $B\in \Omega$, $D\subseteq B$ has measure $\geq r$. We apply this here with $\Omega$ being the algebra generated by rectangles.
\end{proof}

\begin{lemme}
Let $\mu(x)$ be a measure over $M$. Then $\mu(x)$ is smooth if and only if it is weakly orthogonal to all measures over $M$.
\end{lemme}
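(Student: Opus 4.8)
I would prove the two implications separately, leaning on the two characterizations just recorded: Fact~\ref{fact_smooth} for smoothness and the immediately preceding lemma for weak orthogonality.

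For the direction "smooth $\Rightarrow$ weakly orthogonal to everything", I would fix an arbitrary measure $\lambda(y)$ over $M$ and check the criterion of the previous lemma. Given an $M$-definable set $D(x,y)$ --- read as a formula $\phi(x;y)$ with $y$ in the role of parameter variable --- and $\epsilon>0$, apply Fact~\ref{fact_smooth} to obtain a finite partition $\psi_i(y)$ of $y$-space and formulas $\theta^1_i(x),\theta^2_i(x)$ over $M$ with $\theta^1_i(x)\impl\phi(x;b)\impl\theta^2_i(x)$ whenever $M\models\psi_i(b)$, and $\mu(\theta^2_i)-\mu(\theta^1_i)\le\epsilon$. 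I would then set
\[
D^-_\epsilon(x,y)=\bigvee_i\bigl(\theta^1_i(x)\wedge\psi_i(y)\bigr),\qquad D^+_\epsilon(x,y)=\bigvee_i\bigl(\theta^2_i(x)\wedge\psi_i(y)\bigr).
\]
Both are unions of rectangles. Since the $\psi_i$ partition $y$-space, for each $b$ the "$y=b$-slice" of $D^-_\epsilon$ (resp.\ $D^+_\epsilon$) equals $\theta^1_i(x)$ (resp.\ $\theta^2_i(x)$) for the unique $i$ with $M\models\psi_i(b)$, so $D^-_\epsilon\subseteq D\subseteq D^+_\epsilon$; and the rectangles making up each of the two unions are pairwise disjoint, so $(\mu\times\lambda)(D^+_\epsilon)=\sum_i\mu(\theta^2_i)\lambda(\psi_i)$ and $(\mu\times\lambda)(D^-_\epsilon)=\sum_i\mu(\theta^1_i)\lambda(\psi_i)$, whence their difference is $\sum_i\bigl(\mu(\theta^2_i)-\mu(\theta^1_i)\bigr)\lambda(\psi_i)\le\epsilon\sum_i\lambda(\psi_i)=\epsilon$. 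By the previous lemma, $\mu$ and $\lambda$ are weakly orthogonal. This half is pure bookkeeping.

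For the converse I would argue by contraposition: if $\mu$ is not smooth, I will produce a measure over $M$ to which it is not weakly orthogonal. Pick $N\succeq M$ and two distinct extensions $\mu_1,\mu_2$ of $\mu$ to $N$, witnessed by $\mu_1(\delta(x,b))\neq\mu_2(\delta(x,b))$ for some $L(M)$-formula $\delta(x,y)$ and some $b\in N^{|y|}$. Let $r(y)=\tp(b/M)$, a Dirac measure over $M$, and for $j=1,2$ define $\omega_j$ over $M$ in the variables $(x,y)$ by $\omega_j(\chi(x,y))=\mu_j(\chi(x,b))$. Using $N\succeq M$ (which preserves inclusions and disjointness of $M$-definable sets), $\omega_j$ is a well-defined Keisler measure over $M$; and it is a product measure of $\mu(x)$ and $r(y)$, since for $\theta(x),\eta(y)$ over $M$ we get $\omega_j(\theta(x)\wedge\eta(y))=\mu_j(\theta(x)\wedge\eta(b))$, which is $\mu(\theta(x))=\mu(\theta(x))\cdot r(\eta)$ if $\eta\in r$ and $0=\mu(\theta(x))\cdot r(\eta)$ otherwise. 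But $\omega_1(\delta(x,y))\neq\omega_2(\delta(x,y))$, so $\mu$ and $r$ admit two distinct product measures and are not weakly orthogonal, as desired.

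The only real idea here is in the converse: pull the two extensions $\mu_1,\mu_2$ to $N$ back along the substitution $y\mapsto b$ to get two distinct product measures of $\mu$ with the single type $\tp(b/M)$. I do not anticipate a genuine obstacle; the one point to handle with a little care is that $\omega_j$ is a measure over $M$ rather than over $N$, which is precisely where the elementarity $N\succeq M$ is used.
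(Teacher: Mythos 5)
Your proof is correct. The forward direction (smooth $\Rightarrow$ weakly orthogonal to all measures) is the same argument as the paper's: apply Fact~\ref{fact_smooth} to get $\psi_i,\theta^1_i,\theta^2_i$, form the two unions of rectangles $D^\pm_\epsilon$, and observe they squeeze $D$ with product-measure gap $\le\epsilon$. For the converse the paper simply appeals to the known equivalence ``smooth iff weakly orthogonal to all types'' (calling it ``by definition'') and notes that types are measures; you instead prove that equivalence from the paper's actual definition of smoothness (unique extension to elementary extensions), by taking two distinct extensions $\mu_1,\mu_2$ to $N\succeq M$ differing on $\delta(x,b)$ and pulling each back along $y\mapsto b$ to get two distinct product measures of $\mu$ with $\tp(b/M)$. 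That pullback argument is exactly the content behind the cited equivalence, so the route is the same in spirit, but yours is more self-contained: it needs only the unique-extension definition and elementarity of $M\preceq N$, and it makes explicit that the offending measure $\lambda$ can always be taken to be a type. One small implicit step in your contrapositive is that extensions of $\mu$ to $N$ always exist, so ``not unique'' means ``at least two''; this is standard (by compactness/Hahn--Banach) and fine to leave unsaid.
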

\begin{proof}
By definition, a measure is smooth if and only if it is weakly orthogonal to all types over $M$. Hence right to left is fine.

Left to right is \cite[Corollary 2.5]{NIP3} and follows easily from Fact \ref{fact_smooth}: a formula $\phi(x;y)$ is well approximated from below by the union of rectangles $P(x,y):=\bigvee_{i} \theta^0_i(x)\wedge \psi_i(y)$ and from above by $Q(x,y):=\bigvee_i \theta^1_i(x) \wedge \psi_i(y)$. If $\omega(x,y)$ is a product measure of $\mu(x)$ and some $\lambda(y)$, then $\omega(Q)-\omega(P)$ cannot be more than $\epsilon$.
\end{proof}

If $R(x_1,\ldots,x_n)$ is a relation and sets $A_i\subseteq M^{|x_i|}$ are given, we say that the tuple $(A_1,\ldots,A_n)$ is $R$-homogeneous if either for all $(x_1,\ldots,x_n)\in A_1\times \cdots \times A_n$, $R(x_1,\ldots,x_n)$ holds or  for all $(x_1,\ldots,x_n)\in A_1\times \cdots \times A_n$, $\neg R(x_1,\ldots,x_n)$ holds.

\begin{cor}\label{cor_smoothdec}
Let $\mu(x)$ be a smooth measure and $\lambda(y)$ any Keisler measure. Let $R(x;y)$ be a formula over $M$ and $\epsilon>0$. Then there is an $M$-definable partition $P_1,\ldots,P_n$ of $x$-space and $Q_1,\ldots,Q_m$ of $y$-space such that 
\[
\sum \mu(P_i)\lambda(Q_j) < \epsilon,
\]
where the sum runs over all pairs $(i,j)$ such that $(P_i,Q_j)$ is not $R$-homogenous.
\end{cor}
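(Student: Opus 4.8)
The plan is to feed the formula $R(x;y)$ into Fact \ref{fact_smooth} (equivalently, into the lemma that a smooth measure is weakly orthogonal to every measure over $M$) with parameter $\epsilon/2$ in place of $\epsilon$. This produces finitely many formulas $\psi_i(y)$ partitioning $y$-space together with formulas $\theta^1_i(x),\theta^2_i(x)$ over $M$ such that $M\models\psi_i(b)$ implies $M\models(\theta^1_i(x)\impl R(x;b)\impl\theta^2_i(x))$, and such that $\mu(\theta^2_i)-\mu(\theta^1_i)\leq\epsilon/2$ for every $i$. Discarding any inconsistent $\psi_i$ and replacing $\theta^1_i$ by $\theta^1_i\wedge\theta^2_i$, I may assume $\theta^1_i\subseteq\theta^2_i$ as definable sets.

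Next I build the partitions. Take $Q_1,\dots,Q_m$ to be the $\psi_i$'s, and take $P_1,\dots,P_n$ to be the atoms of the finite boolean algebra of $x$-definable sets generated by all the $\theta^1_i$ and $\theta^2_i$; this is an $M$-definable partition of $x$-space, and by construction each $P_k$ is, for each $i$, either contained in $\theta^1_i$, or disjoint from $\theta^2_i$, or contained in $\theta^2_i\setminus\theta^1_i$. I then observe which pairs $(P_k,Q_j)$ can fail to be $R$-homogeneous: if $P_k\subseteq\theta^1_i$ then $R(x;b)$ holds for all $x\in P_k$ and all $b$ with $\psi_i(b)$; if $P_k\cap\theta^2_i=\emptyset$ then $\neg R(x;b)$ holds on $P_k$ for all such $b$. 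In either case $(P_k,\psi_i)$ is $R$-homogeneous, so any non-homogeneous pair $(P_k,\psi_i)$ satisfies $P_k\subseteq\theta^2_i\setminus\theta^1_i$.

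The estimate is then routine: for fixed $i$, the $P_k$ contained in $\theta^2_i\setminus\theta^1_i$ are pairwise disjoint subsets of $\theta^2_i\setminus\theta^1_i$, so $\sum_{k:\,P_k\subseteq\theta^2_i\setminus\theta^1_i}\mu(P_k)\leq\mu(\theta^2_i)-\mu(\theta^1_i)\leq\epsilon/2$. Summing over $i$, using that the $\psi_i$ partition $y$-space and $\lambda$ is a probability measure,
\[
\sum_{(k,i)\text{ not }R\text{-hom.}}\mu(P_k)\lambda(\psi_i)\;\leq\;\sum_i\lambda(\psi_i)\cdot\frac{\epsilon}{2}\;=\;\frac{\epsilon}{2}\;<\;\epsilon,
\]
which is what we want (we have only overcounted, since some pairs with $P_k\subseteq\theta^2_i\setminus\theta^1_i$ may in fact be homogeneous). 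There is no serious obstacle here: all the work is already contained in Fact \ref{fact_smooth}, and the only points needing a little care are the harmless reduction to $\theta^1_i\subseteq\theta^2_i$ and the remark that passing to the common refinement of the $\theta^j_i$ on the $x$-side keeps the number of pieces finite.
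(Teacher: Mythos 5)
Your proof is correct and takes essentially the same approach as the paper: both extract the partitions from the finite $\epsilon$-approximation structure of Fact \ref{fact_smooth}, with the paper routing through the weak-orthogonality lemma (whose proof is itself just an application of that fact to obtain rectangles) and then taking atoms on both sides. Your version is marginally more direct in that you use the $\psi_i$'s themselves as the $y$-side partition, take atoms only on the $x$-side, and write out the error bound explicitly; these are cosmetic differences rather than a genuinely different route.
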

\begin{proof}
By the previous lemmas, there are two definable sets $P, Q\subseteq M^{|x|+|y|}$ which are union of rectangles $A_i(x)\times B_i(y)$ such that:

1. $P\subseteq R \subseteq Q$;

2. $\omega(Q)-\omega(P)<\epsilon$, for any product measure $\omega(x,y)$ of $\mu(x)$ and $\lambda(y)$.

We then obtain the partition $(P_i)_i$ by taking all the atoms in the boolean algebra generated by the $A_i$'s and same for $(Q_j)_j$ and the $B_i$'s.
\end{proof}

This generalizes immediately to a product of $n$-many smooth measures (or indeed $n-1$ smooth measures and an arbitrary one).

\begin{cor}\label{cor_smoothdec2}
Let $\mu_1(x_1),\ldots,\mu_n(x_n)$ be smooth measures over $M$. Fix a formula $R(x_1,\ldots,x_n)$ and $\epsilon>0$. Then for each $i$, there is an $M$-definable partition $P(i,1),\ldots,P(i,m(i))$ of $x_i$-space such that
\[
\sum \mu_1(P(1,i_i))\cdots \mu_n(P(n,i_n)) < \epsilon,
\]
where the sum runs over all tuples $(i_1,\ldots,i_n)$ for which the family $(P(1,i_1),\ldots,P(n,i_n))$ is not $R$-homogeneous.
\end{cor}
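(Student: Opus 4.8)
The plan is to prove the statement by induction on $n$, with Corollary~\ref{cor_smoothdec} serving both as the base case and as the engine of the inductive step. The case $n=1$ is trivial (partition $x_1$-space into $R(x_1)$ and $\neg R(x_1)$), and $n=2$ is Corollary~\ref{cor_smoothdec}, since a smooth measure is in particular a Keisler measure. So suppose $n\geq 2$ and that the result holds for $n-1$. Since each $\mu_i$ is smooth, it is weakly orthogonal to \emph{every} measure over $M$ (that is the characterization of smoothness proved above); iterating this, the product measure $\nu(x_1,\ldots,x_{n-1}):=\mu_1\otimes\cdots\otimes\mu_{n-1}$ is a well-defined Keisler measure (build it as $\mu_1\otimes(\mu_2\otimes(\cdots))$, each step being legitimate because its outermost factor is smooth, and the result is a product measure of $\mu_1,\ldots,\mu_{n-1}$ by the routine computation on rectangles). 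Apply Corollary~\ref{cor_smoothdec} to the smooth measure $\mu_n(x_n)$, the measure $\nu$, the formula $R$ read with $x_n$ as the first block and $\bar x=(x_1,\ldots,x_{n-1})$ as the second, and the error $\epsilon/2$. This yields a partition $P(n,1),\ldots,P(n,m(n))$ of $x_n$-space and a partition $Q_1,\ldots,Q_k$ of $\bar x$-space with $\sum\mu_n(P(n,j))\,\nu(Q_l)<\epsilon/2$, the sum ranging over all pairs $(j,l)$ such that $R$ is not constant on $Q_l\times P(n,j)$.

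The remaining issue is that the $Q_l$ are arbitrary definable sets rather than boxes, and this is exactly what the inductive hypothesis repairs. For each $l\leq k$, apply the induction hypothesis to $\mu_1,\ldots,\mu_{n-1}$, the $(n-1)$-ary relation ``$\bar x\in Q_l$'', and the error $\epsilon/(2k)$; for $i<n$, let $(P(i,\cdot))$ be a common refinement over $l$ of the resulting partitions of $x_i$-space. Call a box $B=P(1,i_1)\times\cdots\times P(n-1,i_{n-1})$ \emph{good} if it is $Q_l$-homogeneous for every $l$; since the $Q_l$ partition $\bar x$-space, a good box is contained in exactly one $Q_l$. Passing to common refinements does not spoil the estimates: any box of the refinement meeting both $Q_l$ and $\neg Q_l$ sits inside a box of the $l$-th partition with the same property, and $\nu$ is finitely additive on boxes (on which it agrees with $\prod_{i<n}\mu_i$, being a product measure), so for each $l$ the total $\nu$-measure of boxes meeting both $Q_l$ and $\neg Q_l$ is still $<\epsilon/(2k)$. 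A box fails to be good precisely when this happens for some $l$, so by a union bound the total $\nu$-measure of non-good boxes is $<\epsilon/2$.

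It then remains to verify that the partitions $(P(1,\cdot)),\ldots,(P(n-1,\cdot)),(P(n,\cdot))$ work. Fix a box $P(1,i_1)\times\cdots\times P(n,i_n)$, write $B$ for its $\bar x$-part and $P=P(n,i_n)$; its weight $\mu_1(P(1,i_1))\cdots\mu_n(P(n,i_n))$ equals $\nu(B)\,\mu_n(P)$. If $B$ is not good, then summing over all such boxes gives $\sum_{B\text{ not good}}\nu(B)\bigl(\sum_{i_n}\mu_n(P)\bigr)=\sum_{B\text{ not good}}\nu(B)<\epsilon/2$. If $B$ is good, it lies in a unique $Q_{l_0}$, so $B\times P\subseteq Q_{l_0}\times P$; hence if the box is not $R$-homogeneous, then $R$ is not constant on $Q_{l_0}\times P$, i.e.\ $(i_n,l_0)$ is one of the bad pairs from the first step. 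Summing these contributions and using that the good boxes inside a fixed $Q_{l_0}$ are pairwise disjoint subsets of $Q_{l_0}$, one bounds this part by $\sum_{(j,l)\text{ bad}}\mu_n(P(n,j))\,\nu(Q_l)<\epsilon/2$. Adding the two estimates bounds the total by $\epsilon$.

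I expect the only mildly delicate points to be the well-definedness of the iterated product $\nu$ — this is where one really uses that \emph{all} the $\mu_i$ are smooth, through weak orthogonality — and the verification that refining the partitions preserves the earlier small-measure estimates; the rest is bookkeeping with finite additivity of product measures.
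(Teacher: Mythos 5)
Your proof is correct. For context, the paper gives no proof of this corollary at all: it simply appears under the sentence ``This generalizes immediately to a product of $n$-many smooth measures,'' so you are filling in an argument the author left implicit. Your induction is the natural way to do this, and the details check out: the iterated product $\nu=\mu_1\otimes\cdots\otimes\mu_{n-1}$ is well-defined precisely because at each stage the outermost factor is smooth and hence weakly orthogonal to everything; Corollary~\ref{cor_smoothdec} applied to $\mu_n$ and $\nu$ produces a bad-pair bound $<\epsilon/2$, but with a partition $(Q_l)$ of $\bar x$-space into arbitrary (non-box) pieces; and the inductive hypothesis applied to each indicator $Q_l$ with budget $\epsilon/(2k)$, followed by a common refinement, repairs this. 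The two key bookkeeping points — that refining preserves the $\epsilon/(2k)$ estimates because a bad refinement box sits inside a bad coarse box and $\nu$ is a product measure on boxes, and that each non-homogeneous box with good $\bar x$-part is charged to exactly one bad pair $(i_n,l_0)$ since $l_0$ is determined by the box — are both handled correctly, and the two halves sum to $<\epsilon$. The only thing I would flag as worth spelling out in a final write-up is the claim that the iterated product is indeed a product measure of all $n-1$ factors simultaneously (so that $\nu(B)=\prod_{i<n}\mu_i(P(i,i_i))$ on boxes); you note it as ``routine'' and it is, but it is the hinge on which the additivity computations turn.
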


\section{Distality and the main result}

The class of distal theories was introduced in \cite{distal} to capture the notion of an order-like, or purely unstable, NIP theory. There are various equivalent definitions of distality in terms of indiscernible sequences, invariant types, measures, arbitrary types... We only need to know here that a theory is distal if and only if all generically stable measures are smooth. Thus we have:

\begin{lemme}\label{lem_ultradistal}
Let $T$ be distal, then an ultraproduct of smooth measures is smooth.
\end{lemme}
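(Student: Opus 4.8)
The plan is to combine Corollary~\ref{cor_ultra} with the characterization of distality given just above: in a distal theory, a measure is smooth if and only if it is generically stable. So the proof is essentially a two-line chain of implications, and the real content is checking that the hypotheses of the cited results are met.

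\begin{proof}
Let $(\mu_i)_{i\in I}$ be a family of smooth measures, $\mu_i(x)$ over a model $M_i \models T$, and let $\mathcal U$ be an ultrafilter on $I$. Since each $\mu_i$ is smooth, it is in particular generically stable (smoothness implies generic stability in any NIP theory; indeed a smooth measure is approximated by finite averages uniformly in the parameter, which is exactly Definition~\ref{defi_genstable} — or one can simply invoke that smooth $\Rightarrow$ generically stable is standard). Hence by Corollary~\ref{cor_ultra}, the ultraproduct $\tilde\mu := \prod_{\mathcal U}\mu_i$ is a generically stable measure on $\tilde M := \prod_{\mathcal U} M_i$. Now $\tilde M$ is again a model of $T$ (by \L o\'s's theorem), and $T$ is distal, so every generically stable measure on a model of $T$ is smooth. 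Therefore $\tilde\mu$ is smooth.
\end{proof}

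The only point that requires a moment's care is the first step, namely that a smooth measure is generically stable, so that Corollary~\ref{cor_ultra} applies; this is immediate from the approximation property in Fact~\ref{fact_smooth} (or is recorded as a standard fact in \cite[Chapter 7]{NIPBook}). After that the argument is purely formal: ultraproduct preserves generic stability (Corollary~\ref{cor_ultra}), and in the distal setting generic stability upgrades back to smoothness. I do not anticipate any genuine obstacle here — the statement is really a corollary packaging the equivalence ``distal $\iff$ generically stable $=$ smooth'' together with the ultraproduct stability of generic stability already established.
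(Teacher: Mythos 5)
Your proof is correct and takes exactly the route the paper does: smooth implies generically stable (always, in NIP), ultraproducts preserve generic stability by Corollary~\ref{cor_ultra}, and distality upgrades generic stability back to smoothness. The paper compresses this to a one-line remark; you have simply unfolded the same chain of implications.
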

\begin{proof}
This follows immediately from Corollary \ref{cor_ultra} and the fact that generically stable measures and smooth measures coincide.
\end{proof}

We now have all we need to prove the main theorem. The following is Corollary 4.6 in \cite{CherStar}, which is the final statement in Sections 3 and 4 of that paper. We prove the version with parameters directly, although as observed in the proof of Corollary 4.6, it would follow at once from the simpler parameter-free version.

\begin{thm}\label{th_main}
Let $T$ be distal, $R_y(x_1,\ldots,x_d)=R(x_1,\ldots,x_d;y)$ a definable relation. Given $\alpha>0$, there is $\delta>0$ such that: for any parameter $b$, for any generically stable (equiv. smooth) product measure $\omega$ on $M^{|x_1|}\times \cdots \times M^{|x_k|}$ with $\omega(R_b)\geq\alpha$, there are definable sets $A_i \subseteq M^{|x_i|}$ with $\omega|_{x_i} (A_i)> \delta$ for all $i$ such that $\prod_i A_i \subseteq R_b$.

Moreover, each $A_i$ is defined by an instance of a formula that depends only on $R$ and $\alpha$.
\end{thm}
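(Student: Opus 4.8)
The plan is to feed the formula $R(x_1,\dots,x_d;b)$ into the smooth decomposition of Corollary \ref{cor_smoothdec2}, pull out a single homogeneous cell of non-negligible mass by pigeonhole, and then upgrade the whole construction to be uniform in $b$ and $\omega$ by an ultraproduct argument resting on Lemma \ref{lem_ultradistal}.

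First I would reduce to the case $\omega=\mu_1\times\cdots\times\mu_d$ with $\mu_i:=\omega|_{x_i}$: projecting the approximating tuples of Definition \ref{defi_genstable} shows each marginal $\mu_i$ is generically stable, hence smooth by distality. Fix $\epsilon:=\alpha/3$ and apply Corollary \ref{cor_smoothdec2} to $\mu_1,\dots,\mu_d$ and the relation $R_b$, obtaining partitions $\{P(i,j)\}_j$ of the $x_i$-spaces such that the non-$R_b$-homogeneous cells $\prod_i P(i,i_j)$ have total $\omega$-mass $<\epsilon$. Call a homogeneous cell \emph{positive} if $R_b$ holds throughout it. Since $R_b$ is disjoint from every homogeneous non-positive cell, $R_b$ is covered by the positive cells together with the non-homogeneous ones, so the positive cells have total $\omega$-mass $>\alpha-\epsilon=2\alpha/3$. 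With $N:=\prod_i m(i)$ cells in all, some positive cell $\prod_i P(i,i_j)$ has $\omega$-mass $\geq 2\alpha/(3N)$, and since $\omega$ is a product measure each factor satisfies $\mu_i(P(i,i_j))\geq 2\alpha/(3N)$ (the remaining factors being $\leq 1$). Then $A_i:=P(i,i_j)$ satisfies $\prod_i A_i\subseteq R_b$ and $\omega|_{x_i}(A_i)\geq 2\alpha/(3N)$.

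The nontrivial point is the ``moreover'' clause: $N$ and the complexity of the defining formulas of the $A_i$ must be bounded in terms of $R$ and $\alpha$ alone. Unwinding Corollaries \ref{cor_smoothdec} and \ref{cor_smoothdec2}, the $P(i,j)$ are atoms of the Boolean algebra generated by instances of the formulas $\theta^1_k,\theta^2_k,\psi_k$ produced by Fact \ref{fact_smooth} for the marginals, so it suffices to bound, uniformly over all smooth (equivalently, generically stable) measures and all parameters $b$, the number and complexity of formulas needed to approximate $R$ to error $\epsilon$. If this failed I would take, for each $n$, a counterexample $(M_n,b_n,\mu_n^1,\dots,\mu_n^d)$ admitting no such approximation with $\leq n$ formulas of complexity $\leq n$ and error $\leq\epsilon/2$, pass to an ultraproduct over a nonprincipal $\mathcal U$, and use Lemma \ref{lem_ultradistal} to see that $\tilde\mu_i:=\prod_{\mathcal U}\mu_n^i$ is still smooth. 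A finite approximation of $R_{\tilde b}$ to error $\epsilon/2$ for the $\tilde\mu_i$ then transfers back: conditions (1)--(2) of Fact \ref{fact_smooth} are first-order, and $\tilde\mu_i(\theta^2_k)-\tilde\mu_i(\theta^1_k)\leq\epsilon/2$ forces $\mu_n^i(\theta^2_k)-\mu_n^i(\theta^1_k)\leq\epsilon$ for $\mathcal U$-almost all $n$, contradicting the choice of the $M_n$ for large $n$. This gives a bound $N_0$ on all the $N$'s and shows the $P(i,j)$ are, uniformly, instances of one of finitely many $\emptyset$-formulas, hence of a single formula $\chi_i(x_i;z_i)$ depending only on $R$ and $\alpha$; taking $\delta:=\alpha/(2N_0)<2\alpha/(3N_0)$ finishes.

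I expect this final uniformity step to be the main obstacle, and it is exactly where distality is used essentially: Corollary \ref{cor_ultra} only says that an ultraproduct of generically stable measures is generically stable, which in a non-distal theory is strictly weaker than smooth, so the compactness argument would collapse — it is Lemma \ref{lem_ultradistal} that rescues it.
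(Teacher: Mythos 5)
Your proof is essentially the same as the paper's: smooth decomposition via Corollary \ref{cor_smoothdec2}, pigeonhole to extract one positive cell of non‑negligible mass, then a compactness/ultraproduct argument resting on Lemma \ref{lem_ultradistal} to get uniformity. The first two paragraphs correctly fill in details the paper leaves implicit (the marginals of $\omega$ are generically stable hence smooth, and the explicit $2\alpha/(3N)$ pigeonhole bound).

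The one place your argument diverges — and also the one place it slips — is the uniformity step. The paper runs compactness directly against the full conclusion of the theorem: for each $n$ it takes a model $M(n)$, measure $\omega(n)$ and parameter $b(n)$ in which none of the first $n$ candidate formula tuples yields sets $A_i$ with $\omega|_{x_i}(A_i)>1/n$ and $\prod_i A_i\subseteq R_b$; in the ultraproduct, the first paragraph supplies working formulas and a positive $\delta$, which transfer to almost all factors — contradiction. You instead run compactness against a bound on the \emph{size and complexity of the approximation} from Fact \ref{fact_smooth}, and then re‑derive the conclusion afterwards. This works, but it forces you to re‑open Corollaries \ref{cor_smoothdec} and \ref{cor_smoothdec2} and trace the partitions back to the $\theta^1_k,\theta^2_k,\psi_k$, which is extra bookkeeping the paper avoids. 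It also introduces a small mismatch in the $\epsilon$-bookkeeping: you choose counterexamples $M_n$ admitting no approximation with error $\leq\epsilon/2$, but the approximation you transfer back from the ultraproduct is only guaranteed to have error $\leq\epsilon$ (the ultralimit bound $\leq\epsilon/2$ degrades to $\leq\epsilon/2+\delta$ for any $\delta>0$ on $\mathcal U$-almost all indices), which does not contradict your choice. The fix is trivial — either pick the counterexamples to fail at error $\leq\epsilon$, or apply Fact \ref{fact_smooth} with $\epsilon/4$ in the ultraproduct — but as written the contradiction does not quite land. Targeting the conclusion of the theorem directly, as the paper does, sidesteps all of this.
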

\begin{proof}
This is an direct consequence of Corollary \ref{cor_smoothdec} and Lemma \ref{lem_ultradistal}. We give details. First, by Corollary \ref{cor_smoothdec2}, given $\omega$ and $R_b$, we can find sets $A_i \subseteq M^{|x_i|}$ such that $\prod A_i\subseteq R_b$ and $\omega|_{x_i}(A_i)>\delta$ for some $\delta>0$ depending on all the data.

Now assume that the conclusion is false. For simplicity of notations, assume $L$ is countable and list all tuples of the form $(\theta_1(x_1;y_1),\ldots,\theta_d(x_d;y_d))$ as $(\bar \theta_j : j<\omega)$, where $\bar \theta_j = (\theta^j_1(x_1;y_{1,j}),\ldots,\theta^j_d(x_d;y_{d,j}))$. Given an integer $n$, there is a smooth measure $\omega=\omega(n)$ over a model $M=M(n)$ and a parameter $b=b(n)\in M(n)$ such that $\omega(R_b)\geq \alpha$ and for any $j\leq n$ and any choice of parameters $b_{i,j}$ from $M$, setting $A_i = \theta_i^j(M;b_{i,j})$, either for some $i$, $\omega|_{x_i}(A_i)\leq1/n$, or it is not the case that $\prod_i A_i \subseteq R_b$.

Take a non-principal ultraproduct of the $\omega(n)$ to obtain a measure $\tilde \omega$ over some model $\tilde M$ and some $\tilde b\in \tilde M$ such that $\tilde \omega(R_{\tilde b})\geq \alpha$. By Lemma \ref{lem_ultradistal}, $\tilde \omega$ is smooth. Hence by the first paragraph, we can find $A_i$'s and $\delta$ for $\tilde \omega$ and $R_{\tilde b}$ as in the statement. The same $\delta$ and formulas defining the $A_i$'s will work for almost all factors, contradicting the construction.
\end{proof}

We can also prove the asymmetric version which is Theorem 3.6 in \cite{CherStar}.
\begin{thm}\label{th_asym}
Let $T$ be distal and $R(x,y)$ a definable relation. Let $\beta \in (0,\frac 1 2)$. There are $\alpha\in (0,1)$ and formulas $\phi_1(x;z_1), \phi_2(y;z_2)$ such that:

For any Keisler measure $\lambda(x)$ and generically stable measure $\mu(y)$ both on $M$, there are parameters $c_1\in M^{|z_1|}$ and $c_2\in M^{|z_2|}$ with $\lambda(\phi_1(x;c_1))\geq \alpha$, $\mu(\phi_2(y;c_2))\geq \beta$ and the pair $\phi_1(M;c_1)$, $\phi_2(M;c_2)$ is $R$-homogeneous.
\end{thm}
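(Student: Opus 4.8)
My plan is to follow the two‑step pattern of the proof of Theorem~\ref{th_main}: first prove the statement for a fixed pair $(\lambda,\mu)$, with \emph{some} $\alpha>0$ and with the witnessing sets defined by formulas built directly from $R$; then upgrade to the uniform statement (with $\alpha$, $\phi_1$, $\phi_2$ independent of $M$, $\lambda$, $\mu$) by a non‑principal ultraproduct, using Lemma~\ref{lem_ultradistal}. Since $T$ is distal, the generically stable measure $\mu$ is smooth, so the tools of Section~1 all apply to it.

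\emph{Pointwise version.} Fix an auxiliary $\beta'$ with $\beta<\beta'<\tfrac12$ and apply Fact~\ref{fact_smooth} to the smooth measure $\mu(y)$ and to $R$, with $x$ in the role of the parameter variable and $\epsilon:=1-2\beta'>0$. This yields formulas $\psi_1(x),\dots,\psi_N(x)$ partitioning $x$‑space and formulas $\theta^1_i(y),\theta^2_i(y)$ over $M$ with $\theta^1_i(M)\subseteq R(a;M)\subseteq\theta^2_i(M)$ whenever $\models\psi_i(a)$ (here $R(a;M):=\{b:R(a,b)\}$), and $\mu(\theta^2_i(M))-\mu(\theta^1_i(M))\le\epsilon$. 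The finitely many $\psi_i(M)$ partition $x$‑space, so one of them, say $\psi_{i_0}(M)$, has positive $\lambda$‑measure. If $\mu(\theta^1_{i_0}(M))\ge\beta'$, put $A_2:=\theta^1_{i_0}(M)$ and $A_1:=\{a:\theta^1_{i_0}(M)\subseteq R(a;M)\}$; then $A_1$ is defined by $\forall y\,(\theta^1_{i_0}(y)\to R(x,y))$, it contains $\psi_{i_0}(M)$ by the sandwich property, and $A_1\times A_2\subseteq R$. Otherwise $\mu(\theta^2_{i_0}(M))<\beta'+\epsilon$, so $\mu(\neg\theta^2_{i_0}(M))>1-\beta'-\epsilon=\beta'$; put $A_2:=\neg\theta^2_{i_0}(M)$ and $A_1:=\{a:R(a;M)\subseteq\theta^2_{i_0}(M)\}$, defined by $\forall y\,(R(x,y)\to\theta^2_{i_0}(y))$, again containing $\psi_{i_0}(M)$, with $A_1\times A_2\subseteq\neg R$. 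Either way $\lambda(A_1)\ge\lambda(\psi_{i_0}(M))>0$, $\mu(A_2)\ge\beta'>\beta$, the pair $(A_1,A_2)$ is $R$‑homogeneous, and $A_1$, $A_2$ are instances of formulas of the four shapes $\forall y\,(\rho(y;w)\to R(x,y))$, $\forall y\,(R(x,y)\to\rho(y;w))$, $\rho(y;w)$, $\neg\rho(y;w)$, with $\rho$ an arbitrary formula.

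\emph{Uniform version.} Suppose the theorem fails for $R$ and $\beta$, and (as in the proof of Theorem~\ref{th_main}) assume $L$ countable, listing all pairs $\bar\theta_j=(\theta^j_1(x;z^1_j),\theta^j_2(y;z^2_j))$ of formulas of the four shapes above as $(\bar\theta_j:j<\omega)$. The failure provides, for each $n$, a Keisler measure $\lambda(n)$ and a generically stable (hence smooth) measure $\mu(n)$ on a model $M(n)$ such that for all $j\le n$ and all parameters $c_1,c_2$, either $\lambda(n)(\theta^j_1(x;c_1))<1/n$, or $\mu(n)(\theta^j_2(y;c_2))<\beta$, or $(\theta^j_1(M(n);c_1),\theta^j_2(M(n);c_2))$ is not $R$‑homogeneous. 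Take a non‑principal ultraproduct: $\tilde\lambda$ is a Keisler measure on $\tilde M$, and $\tilde\mu$ is smooth by Lemma~\ref{lem_ultradistal}. Applying the pointwise version to $(\tilde\lambda,\tilde\mu)$ and $\beta'$ produces $c_1,c_2\in\tilde M$ and an index $j_0$ with $\tilde\lambda(\theta^{j_0}_1(x;c_1))\ge\alpha_0>0$, $\tilde\mu(\theta^{j_0}_2(y;c_2))\ge\beta'$, and the pair $R$‑homogeneous. Writing $c_1=[c_1^{(k)}]$, $c_2=[c_2^{(k)}]$ and recalling that the ultraproduct measure of a formula is the ultralimit of its values in the factors, the set of $k$ for which $\lambda(k)(\theta^{j_0}_1(x;c_1^{(k)}))>\alpha_0/2$, $\mu(k)(\theta^{j_0}_2(y;c_2^{(k)}))>\beta$, and the pair is $R$‑homogeneous lies in the ultrafilter (on the $\mu$‑side one uses $\beta'>\beta$; homogeneity is first order, so {\L}o\'s applies); picking any such $k>\max(j_0,2/\alpha_0)$ contradicts the choice of $M(k)$.

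The pointwise step is elementary; the delicate point is the pullback in the last step, which only controls ultralimits and not individual values of the measures of the relevant formulas. This is precisely why the pointwise version is run with the strictly larger threshold $\beta'$ on the $\mu$‑side, so that the gap $\beta'-\beta$ absorbs the perturbation in the limit — no analogous precaution is needed on the $\lambda$‑side, where any fixed $\alpha_0>0$ eventually exceeds $1/n$.
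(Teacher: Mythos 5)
Your proof is correct and follows essentially the same route as the paper: apply Fact~\ref{fact_smooth} to the smooth measure $\mu$ with $\epsilon$ chosen so that the sandwich gap forces one of $\theta^1_{i_0}$, $\neg\theta^2_{i_0}$ to have $\mu$-measure strictly above $\beta$, pick a cell $\psi_{i_0}$ of positive $\lambda$-measure, and uniformize by a non-principal ultraproduct via Lemma~\ref{lem_ultradistal}. The only (harmless) variations are that the paper takes $\phi_1=\psi_{i_0}$ directly rather than the enlarged set $\forall y(\theta^1_{i_0}(y)\to R(x,y))$ (resp.\ $\forall y(R(x,y)\to\theta^2_{i_0}(y))$), and that you make explicit the slack $\beta'>\beta$ needed when passing from the ultralimit back to the factors --- a point the paper handles implicitly by noting the relevant measure is $\geq(1-\epsilon)/2>\beta$.
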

\begin{proof}
Choose $R$ and $\beta\in (0, \frac 1 2 - \epsilon)$ and fix a pair of measures $\lambda(x)$, $\mu(y)$ as in the statement. As $\mu$ is smooth, we can apply Fact \ref{fact_smooth} to it, with parameter $\epsilon$. It gives us formulas $\psi_i(x)$ and $\theta_i^k(y)$. Take $i$ such that $\lambda(\psi_i(x))=:\alpha>0$ and set $\phi_1(x)=\psi_i(x)$. At least one of $\theta_i^1(y)$ or $\neg\theta_i^2(y)$ has measure $\geq \beta$. Let $\phi_2(y)$ be equal to it. Then the pair $(\phi_1(x),\phi_2(y))$ (with hidden parameters from $M$) is as required for the given pair of measures. We conclude by a compactness argument as in Theorem \ref{th_main}.
\end{proof}

\section{Equipartitions}

In Remark 5.11 of \cite{CherStar}, it is asked whether in distal theories, one can cut finite sets uniformly. We answer this question positively.

\begin{defi}\label{def_cut}
Let $\mathcal S$ be a sort. We say that $T$ uniformly cuts finite sets in $\mathcal S$ if for every $\epsilon>0$, there is a formula $\chi(x;y)$ such that for any sufficiently large finite set $A$ in $\mathcal S$, and $r\in [0,1]$, there is a parameter $b$ for which
\[
\left | \frac {|\chi(A;b)|}{|A|} - r\right | \leq \epsilon.
\]
\end{defi}

We will say that $T$ \emph{uniformly cuts generically stable measures} on $\mathcal S$ if the conclusion of Proposition 5.12 in \cite{CherStar} holds, namely: For every formula $\phi(x;y)$ and $\epsilon>0$, there is some $\chi(x;z)$ such that for any generically stable measure $\mu$ on $M$ with $\mu(\{c\})=0$ for any singleton $c\in M^{|x|}$, if $0\leq r\leq \mu(\phi(x;a))$, \underline{then} we can find $b\in M$ with $|\mu(\phi(x;a)\cap \chi(x;b)) -r |\leq \epsilon$.

In the following proof, we write $a \approx_\epsilon b$ for $|a-b|\leq \epsilon$.
\begin{lemme}
Let $\mu(x)$ be generically stable over $M$ and $p\in S_x(M)$ such that $\mu(\{p\})>0$. Then $p$ is generically stable.
\end{lemme}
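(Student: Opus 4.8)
The plan is to extract $p$ as a "weighted piece" of $\mu$ and use the generic stability of $\mu$ (Definition \ref{defi_genstable} / Fact \ref{fact_genstable}) to approximate $p$ directly. Write $c := \mu(\{p\}) > 0$. The decomposition I have in mind is $\mu = c\cdot\delta_p + (1-c)\cdot\mu'$, where $\delta_p$ is the Dirac measure at $p$ and $\mu'$ is some other Keisler measure over $M$; this is legitimate because $\mu(\{p\}) = c$ forces every definable set containing $p$ to have $\mu$-measure $\geq c$ and every definable set avoiding $p$ to have measure $\leq 1-c$, so $\mu' := (\mu - c\,\delta_p)/(1-c)$ is a genuine finitely additive probability measure on $M$-definable sets.

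The key step is then to run the approximation for $\mu$ and read off an approximation for $\delta_p$, i.e.\ for $p$. Fix $\phi(x;y)$ and $\eta > 0$. By Fact \ref{fact_genstable} there are $a_1,\dots,a_n \in M$ (with $n$ depending only on $\phi$ and $\eta$) such that $|\mu(\phi(x;b)) - \Av(a_1,\dots,a_n;\phi(x;b))| \leq \eta$ for all $b$. Now I want to say that "most" of the $a_i$ realize $p$, in the sense that for any finite fragment of $p$. More precisely: given a formula $\phi$, I claim that for a suitable choice of $\eta$ (small relative to $c$), the average $\Av(a_1,\dots,a_n;\phi(x;b))$ must track $\delta_p(\phi(x;b)) = [\,p \vdash \phi(x;b)\,]$ up to $\eta/c$-type error, because $\mu(\phi(x;b))$ is within $\eta/c$ of $\delta_p(\phi(x;b))$ whenever $\delta_p(\phi(x;b)) \in \{0,1\}$ and $\mu'$-mass is bounded. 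Actually the cleaner route: since $\phi$ is NIP, the sets $\phi(x;b)$ form a VC class; among $a_1,\dots,a_n$ let $k$ of them satisfy $p$; then $\Av$ of the $a_i$'s on $\phi(x;b)$ equals $\frac{k}{n}\cdot\Av(\text{those realizing }p) + \frac{n-k}{n}\cdot\Av(\text{others})$, and one shows $k/n$ is close to $c$ and that the sub-average over the realizations of $p$ approximates $p$. The subtlety is handled by choosing, for the given $\phi$, a single formula $\chi(x)\in p$ that "separates" $p$ enough: apply Fact \ref{fact_genstable} to the formula $\phi(x;y)\wedge\chi(x)$ type relations simultaneously. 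This gives: $p$ restricted to $\phi$-formulas is $(\eta/c)$-approximated by the average measure on those $a_i$ lying in $\chi(M)$, and hence $p$ is generically stable in the sense of Definition \ref{defi_genstable}.

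To make this precise and uniform, I would iterate: to approximate $p$ on $\phi(x;y)$ within $\epsilon$, first find (by a compactness/VC argument, using that $p$ is an isolated point of positive mass in the compact space $S_x(M)$ with its regular Borel measure $\mu$) a formula $\chi_0(x) \in p$ with $\mu(\chi_0(x)) < c + \epsilon'$ for small $\epsilon'$; then $\mu$ restricted and renormalized to $\chi_0(M)$ is still generically stable (restricting a generically stable measure to a definable set of positive measure and renormalizing preserves generic stability — this is a standard fact that follows from Fact \ref{fact_genstable} by the same averaging trick, or can be cited), and on that set $p$ has relative mass $> c/(c+\epsilon')$, close to $1$. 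Apply Fact \ref{fact_genstable} to this renormalized measure: the approximating tuple must put almost all its weight on realizations of $p$, so the average over the whole tuple is within $O(\epsilon' + \eta)$ of $\delta_p$ on $\phi$. Choosing $\epsilon', \eta$ appropriately yields the $\epsilon$-approximation required by Definition \ref{defi_genstable}, so $p$ is generically stable.

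The main obstacle I anticipate is the bookkeeping that shows the approximating tuple for the renormalized measure is genuinely concentrated on realizations of $p$ rather than merely on $\chi_0(M)$: a priori a point $a_i \in \chi_0(M)$ need not realize $p$, it only realizes one formula of $p$. Overcoming this requires either shrinking $\chi_0$ further for each relevant $\phi$ (which is fine since Definition \ref{defi_genstable} is formula-by-formula and $\epsilon$-by-$\epsilon$), or observing that for the purpose of approximating $p$ on a fixed $\phi$, realizing "enough" of $p$ is all that is used — namely realizing the single formula $\chi_0$, chosen so that $\chi_0(x) \vdash_\mu$ the $\phi$-behavior of $p$ up to small error. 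I expect the write-up to trade on the flexibility that $\chi_0$ may depend on both $\phi$ and $\epsilon$.
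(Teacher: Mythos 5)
Your proposal is correct and follows essentially the same route as the paper: use regularity of $\mu$ on $S_x(M)$ to find $\theta(x)\in p$ with $\mu(\theta)$ only slightly above $\mu(\{p\})$, apply generic stability of $\mu$ to $\phi(x;y)\wedge\theta(x)$, and observe that the points of the approximating tuple lying in $\theta(M)$ then approximate $p$ on instances of $\phi$ — with $\theta$ allowed to depend on $\phi$ and $\epsilon$, exactly as you note. The only cosmetic difference is that the paper works directly with $\mu$ and $\phi\wedge\theta$, dividing by $\alpha=\mu(\{p\})$ inside the final chain of estimates, rather than passing through the restricted-and-renormalized measure $\mu|_{\theta}$; this sidesteps the auxiliary claim that restriction preserves generic stability.
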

\begin{proof}
This can be seen in various ways. Here is an argument which does not use any additional fact about generically stable measures. Fix a formula $\phi(x;y)$ and $\epsilon>0$. Set $\alpha = \mu(\{p\})$. By regularity of $\mu$ seen as a measure on $S_x(M)$, there is a formula $\theta(x)\in L(M)$ such that $p\vdash \theta(x)$ and $\mu(\theta(x)) \leq (1+\epsilon)\alpha$. Set $\phi'(x;y) = \phi(x;y)\wedge \theta(x)$ and without loss, assume that for some $b_0$, $\phi'(x;b_0) = \theta(x)$. Take $a_1,\ldots,a_n$ given by Definition \ref{defi_genstable} for $\mu$, $\phi'$ and $\epsilon':=\epsilon \alpha$. Reordering, there is $k\leq n$ such that $a_i \models \theta(x)$ if and only if $i\leq k$. We have $|k/n|=(1+\delta) \alpha$ for $\delta\leq2\epsilon$. Then for any $b$, $p(\phi(x;b)) \approx_\epsilon \frac 1 \alpha \mu(\phi'(x;b))  \approx_\epsilon \frac 1 \alpha \Av(a_1,\ldots,a_n;\phi'(x;b)) = \Av(a_1,\ldots,a_k;\phi(x;b))\frac k n \frac 1 \alpha \approx_\delta \Av(a_1,\ldots,a_k;\phi(x;b))$. Hence $p$ is generically stable.
\end{proof}

\begin{prop}
Let $T$ be NIP and $\mathcal S$ a sort. The following are equivalent:

(1) $T$ uniformly cuts finite sets in $\mathcal S$;

(2) $T$ uniformly cuts generically stable measures on $\mathcal S$;

(3) any generically stable type concentrating on $\mathcal S$ is realized.
\end{prop}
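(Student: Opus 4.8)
The plan is to prove the cycle of implications $(2)\Rightarrow(1)\Rightarrow(3)\Rightarrow(2)$. The implication $(2)\Rightarrow(1)$ should be nearly immediate: given $\epsilon>0$, apply the uniform cutting of generically stable measures to the formula $\phi(x;y)$ cutting out all of $\mathcal S$ (i.e.\ $\phi(x;y)= (x=x)$, so $\phi(M;a)=M^{|x|}$), obtaining $\chi(x;z)$. For a sufficiently large finite set $A\subseteq\mathcal S$, the normalized counting measure $\mu_A$ is generically stable, and provided $|A|$ is large enough we have $\mu_A(\{c\})\leq\epsilon/2$ for every singleton; truncating $A$ slightly or absorbing the singleton mass into the error term lets us apply (2) with $r\in[0,1]=[0,\mu_A(\phi(x;a))]$. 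This yields $b$ with $|\mu_A(\chi(x;b))-r|\leq\epsilon$, which is exactly the statement of (1) after clearing denominators.

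For $(1)\Rightarrow(3)$, I argue contrapositively. Suppose $p\in S_x(M)$ is a generically stable type concentrating on $\mathcal S$ that is not realized in $M$. Pass to a large model and then back down, or work directly: using Fact~\ref{fact_genstable}, for each formula $\phi(x;y)$ and each $\epsilon$ there is a bound $n$ and, in any model, finitely many points approximating $p$ — but crucially, since $p$ is a type and not realized, there will be a formula witnessing that no uniform cut is possible. The cleaner route is to transfer to finite sets via an ultraproduct/compactness argument mirroring the one in Theorem~\ref{th_main}: if $T$ uniformly cuts finite sets, then taking an ultraproduct of the normalized counting measures on larger and larger finite sets realizing more and more of $p$'s defining schema produces a generically stable measure $\tilde\mu$ with $\tilde\mu(\{p\})>0$; by the Lemma just proved, $p$ itself is generically stable, and I then need to derive a contradiction from $p$ being unrealized. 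The obstruction: a generically stable \emph{type} in a distal-free (merely NIP) setting that is unrealized obstructs uniform cutting because near $p$ the measure is atomic, so any $\chi(x;b)$ either contains $\phi(x;a)\cap\{p\}$-neighborhood entirely or misses it, creating a ``jump'' of size $\geq\mu(\{p\})$ in the achievable values of $\mu(\phi(x;a)\cap\chi(x;b))$, contradicting the $\epsilon$-density of cuts once $\epsilon<\mu(\{p\})$. Making this jump argument precise — identifying the right formula $\phi$ and the parameter $a$ so that $\phi(x;a)$ isolates the atom at $p$ up to small measure — is where the Lemma on atoms being generically stable, together with regularity of $\mu$ on $S_x(M)$, does the work.

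For $(3)\Rightarrow(2)$, assume every generically stable type on $\mathcal S$ is realized. Fix $\phi(x;y)$ and $\epsilon>0$; I want a single formula $\chi(x;z)$ doing the cutting for all generically stable $\mu$ with no atoms on singletons. By a compactness argument it suffices to produce, for each such $\mu$ and each $a$, \emph{some} formula instance achieving the cut, with the formula drawn from a finite list depending only on $\phi,\epsilon$. Given $\mu$ generically stable and atomless on points, and $0\leq r\leq\mu(\phi(x;a))$, I use Fact~\ref{fact_genstable}: approximate $\mu$ restricted to $\phi(x;a)$ by an average $\Av(c_1,\dots,c_n;-)$ of $n$ points with $n=n(\phi,\epsilon)$. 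Choosing a threshold among finitely many sub-averages of these $n$ points realizes any target value up to error $1/n\leq\epsilon$ — but I need this sub-average to be cut out by a \emph{definable} set $\chi(x;b)$. This is where (3) enters: were no such definable cut available, one could build a generically stable type (a limit of the discrete measures, or a type completing a ``half-cut'' configuration) that is omitted, contradicting (3). The hard part will be this last step — converting the combinatorial availability of the right sub-average into a uniformly definable family $\chi(x;z)$, and this is precisely where the hypothesis ``no unrealized generically stable types'' must be leveraged via a compactness/ultraproduct argument in the style of Theorem~\ref{th_main}, extracting from a failure a generically stable measure whose associated type is omitted. I expect this $(3)\Rightarrow(2)$ direction, and within it the definability-of-the-cut step, to be the main obstacle; the other two implications are comparatively routine.
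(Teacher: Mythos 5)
Your cycle $(2)\Rightarrow(1)\Rightarrow(3)\Rightarrow(2)$ is the same cycle the paper proves (just entered at a different vertex), but each arrow has a real gap or takes a route that doesn't close.

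For $(2)\Rightarrow(1)$: the ``nearly immediate'' direct argument does not work, because the normalized counting measure $\mu_A$ on a finite set $A$ has $\mu_A(\{c\})=1/|A|>0$ at every point of $A$, so the hypothesis of $(2)$ (which requires $\mu(\{c\})=0$ exactly, not $\leq\epsilon/2$) simply does not apply to $\mu_A$. ``Absorbing the singleton mass into the error term'' is not a licensed move: the formula $\chi(x;z)$ supplied by $(2)$ comes with no guarantee for measures with small atoms. The correct route is a compactness argument: suppose $(1)$ fails for some $\epsilon$, witness this by a sequence of finite sets $A_n$ with $|A_n|\geq n$ such that no single $\chi$ cuts cofinally many $A_n$ to precision $\epsilon$, form the ultraproduct $\bar\mu$ of the counting measures $\mu_{A_n}$ (generically stable by Corollary~\ref{cor_ultra}), and observe that $\bar\mu$ is genuinely atomless on singletons because the atom sizes $1/|A_n|\to 0$. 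Now $(2)$ applied to $\bar\mu$ with $\phi=\top$ gives a $\chi$ which, by \L{}o\'s, must already cut almost all $A_n$, a contradiction.

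For $(1)\Rightarrow(3)$: your sketch is circular in places (you invoke the Lemma to conclude $p$ is generically stable after already assuming it) and you explicitly leave the contradiction unfinished. The missing idea is elementary and avoids measures entirely: if $(3)$ fails there is a non-constant totally indiscernible sequence $I=(a_i)_{i<\omega}$ in $\mathcal S$ (a Morley sequence of the unrealized generically stable type). NIP gives, for each formula $\chi(x;y)$, a bound $N=N(\chi)$ such that for every $b$ either $\chi(I;b)$ or $I\setminus\chi(I;b)$ has size at most $N$ (otherwise one could shatter arbitrarily large subsets of the indiscernible sequence). So on a finite segment $A\subseteq I$ with $|A|\gg N$ no instance of $\chi$ can land near ratio $1/2$, and since this bound is uniform over $b$, the formula $\chi$ fails to cut $A$ with precision $\epsilon<1/2-N/|A|$. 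This directly refutes $(1)$.

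For $(3)\Rightarrow(2)$: your plan (approximate $\mu$ by $n$ points via Fact~\ref{fact_genstable}, then try to realize a target sub-average by a definable set) is genuinely different from the paper's, and you yourself flag the definability step as unresolved --- there is no mechanism in your sketch that forces a definable set to pick out exactly a prescribed fraction of the $n$ approximating points, and $(3)$ is not obviously the tool that produces one. The paper's argument is structurally different: use the Lemma plus $(3)$ to conclude that an atomless-on-singletons generically stable $\mu$ has no atoms at all in $S_x(M)$ (any atom would be a generically stable type, hence realized by $(3)$, hence a singleton atom); then regularity of $\mu$ on $S_x(M)$ gives each point of the support a clopen neighborhood of measure $<\epsilon$, compactness of the support extracts a finite subcover, refinement gives a finite definable partition into pieces of measure $\leq\epsilon$; a Corollary~\ref{cor_ultra} ultraproduct argument makes the partition shape uniform over all such $\mu$; finally a coding formula $\chi(x;z)$ parametrizing unions of partition pieces achieves any target $r\leq\mu(\phi(x;a))$ up to $\epsilon$ by accumulating pieces of $\phi(x;a)$ one at a time. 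This is the place where the Lemma on atoms and the regularity of the Borel extension actually do the work --- not in $(1)\Rightarrow(3)$, where you had placed them.
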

\begin{proof}
(3) $\Rightarrow$ (2): First, let $\mu(x)$ be a fixed generically stable measure over $M$ with $\mu(\{c\})=0$ for all $c$. Then if $\mu(\{p\})>0$ for some $p\in S_x(M)$, $p$ is generically stable. By (3) and the assumption on $\mu$, this does not happen. But then by regularity of $\mu$, for any $p$ in the support of $\mu$, we can find a clopen set $U_p\subseteq S_x(M)$ containing $p$ of measure $\mu(U_p)<\epsilon$. By compactness of the support, we can extract a finite cover which we can refine to be composed of disjoint sets. Hence we obtain a finite partition of $x$-space into definable sets of measures $\leq \epsilon$. By Corollary \ref{cor_ultra}, the size and the formulas involved in this partition can in fact be chosen uniformly in $\mu$. By standard coding techniques, we construct a formula $\chi(x;z)$ such that any finite union of members of this partition is equal to some $\chi(x;b)$. Then $\chi(x;z)$ has the required properties (and does not depend on $\phi(x;y)$).

(2) $\Rightarrow$ (1): This is a simple compactness argument. If (1) does not hold, then for a given $\epsilon$, there is a sequence $A_n$ of finite sets, $|A_n|\geq n$ such that for any formula $\chi(x;y)$ , for $n$ large enough, $\chi(x;y)$ cannot be used to cut $A_n$ with precision $\epsilon$ as in Definition \ref{def_cut}. Let $\mu_n$ be the normalized counting measure on $A_n$ and let $\bar \mu$ be a non-principal ultraproduct of the $\mu_n$'s. Then $\bar \mu$ is generically stable and $\bar \mu(\{c\})=0$ for all $c$. Assumption (2) applied to $\phi(x;a)= \top$ gives us a formula $\chi(x;y)$ which can be used to partition the space in sets of arbitrary $\bar \mu$-measures up to $\epsilon$. But then the same $\chi(x;y)$ works for almost all the measures $\mu_n$ contradicting the assumption.

(1) $\Rightarrow$ (3): Assume that (3) fails. Then there is a non-constant totally indiscernible sequence $I=(a_i:i<\omega)$ of elements of $\mathcal S$ (take a Morley sequence of a generically stable type). Then by NIP, for any formula $\chi(x;y)$ there is $N$ such that for any $b$, either $\chi(I;b)$ or $I\setminus \chi(I;b)$ has size $\leq N$. Thus taking large finite sets of $I$, we see that (1) cannot hold.
\end{proof}

Since distal theories satisfy (3), this gives a proof of \cite[Proposition 5.12]{CherStar} for all distal theories (that proposition asserts (2) above assuming distality and (1)).

We end with a question. It is proved in \cite{CherStar} that there is no infinite distal field of characteristic $p$. In general, it would be very interesting to understand what are the possible obstruction to an NIP theory having a distal expansion. One can also ask about expansions that uniformly cut finite sets.

\begin{question}
Does ACF$_p$ have an NIP expansion which uniformly cuts finite sets?
\end{question}

\bibliographystyle{alpha}
\bibliography{tout}

\newcommand{\etalchar}[1]{$^{#1}$}
\begin{thebibliography}{APP{\etalchar{+}}05}

\bibitem[APP{\etalchar{+}}05]{APPRS}
Noga Alon, J{\'a}nos Pach, Rom Pinchasi, Rado{\v s} Radoi{\v c}i{\'c}, and
  Micha Sharir.
\newblock Crossing patterns of semi-algebraic sets.
\newblock {\em Journal of Combinatorial Theory, Series A}, 111(2):310 -- 326,
  2005.

\bibitem[CS]{CherStar}
Artem Chernikov and Sergei Starchenko.
\newblock Regularity lemma for distal structures.
\newblock preprint.

\bibitem[HPS12]{NIP3}
Ehud Hrushovski, Anand Pillay, and Pierre Simon.
\newblock Generically stable and smooth measures in {NIP} theories.
\newblock {\em Trans. Amer. Math. Soc.}, 2012.
\newblock to appear.

\bibitem[Sim13]{distal}
Pierre Simon.
\newblock Distal and non-distal theories.
\newblock {\em Annals of Pure and Applied Logic}, 164(3):294--318, 2013.

\bibitem[Sim14]{NIPBook}
Pierre Simon.
\newblock {\em A Guide to NIP theories}.
\newblock to be published in Lecture Notes in Logic, 2014.

\end{thebibliography}

\end{document}